\documentclass[12pt, 14paper,reqno]{amsart}
\usepackage{amsmath,amsfonts,amssymb}
\usepackage[breaklinks]{hyperref}
\usepackage{graphicx}
\usepackage{longtable}
\usepackage{array}
\usepackage{caption}
\usepackage{enumitem}
\usepackage{tikz}
\usepackage{float}
\usetikzlibrary{positioning}
\makeatletter
\@namedef{subjclassname@2020}{%
  \textup{2020} Mathematics Subject Classification}
\makeatother

\theoremstyle{plain}
\theoremstyle{plain}

\newtheorem{theorem}{Theorem}
\newtheorem{lemma}{Lemma}

\numberwithin{equation}{section}

\newtheorem{prop}{Proposition}[section]

\theoremstyle{proof}

\numberwithin{equation}{section}

\begin{document} 

\title[An infinite family of imaginary biquadratic fields with a large class number]{An infinite family of imaginary biquadratic fields with a large class number}
\author{ P. Rao, K. Chakraborty  and A. Dabhole}
\address{K. Chakraborty, SRM University AP, Amaravati, Andhra Pradesh - 522540.}
\email{kalyan.c@srmap.edu.in}
\address{P. Rao, SRM University AP, Amaravati, Andhra Pradesh--522540}
\email{pratik\_rao@srmap.edu.in}
\address{A. Dabhole, SRM University AP, Amaravati, Andhra Pradesh--522540}
\email{aishwarya\_dabhole@srmap.edu.in}
\keywords{Quadratic fields, Imaginary Biquadratic fields, class number}
\subjclass[2020] {11R27; 11R11; 33C45  }
\maketitle
\begin{abstract} 
We construct an infinite family of imaginary biquadratic fields with large class numbers. The construction of this family is done by composing two distinct families of imaginary quadratic fields: the first one is the family $\mathbb{F}_{l,k}(p)=\mathbb{Q}(\sqrt{k^{2}-p^{l}})$ introduced by Banerjee and Hoque \cite{KB2024}, subject to specific constraints on the integer parameters, $l,k ~\&~p$ and the second one is the parametrized family of the form $\mathbb{Q}(\sqrt{-F_{50s+25}})$ based on Fibonacci numbers, introduced by Kishi \cite{KISHI2008}. One of the main tools that is used is Kuroda's class number formula of imaginary biquadratic fields. We choose these two families with the sole aim to get high order elements in the class group of the resultant biquadratic fields.
\end{abstract}

\maketitle
\section{Introduction }

The study of class groups and class numbers of algebraic number fields has been a key focus in algebraic number theory. Specifically, the challenge of creating infinite families of number fields with class groups that have specific divisibility properties has gained significant interest in recent decades. These investigations connect closely to Hilbert class fields, unramified extensions, the arithmetic of elliptic and hyperelliptic curves.

Amongst the algebraic number fields, quadratic fields occupy a distinguished position because of their arithmetic nature and their rich class group structure. In recent years, considerable attention has been devoted to the study of class groups of quadratic fields, leading to the construction of several infinite families with prescribed class number divisibility. Numerous authors have constructed infinite families of imaginary quadratic fields whose class numbers are divisible by a prescribed integer. We refer to \cite{KB2024}, \cite{KC2018}, \cite{cohn2002},\cite{kishi2011}, \cite{sundar2000} for additional information.

In contrast, the arithmetic of biquadratic fields, especially imaginary biquadratic fields, has received relatively little attention (for more details, see \cite{ath2023}). Sime in \cite{1sime1995} and \cite{2sime1995} investigated the class groups and Hilbert class fields of real biquadratic fields. 
 The literature concerning imaginary biquadratic fields is rather limited. 
 In a recent work, Banerjee et al. \cite{KB2025} have proved that the imaginary biquadratic field
\[
\mathbb{Q}\left(\sqrt{k^2-p^\ell},\sqrt{-q^3-d^2qa+d^3b}\right)
\]
has an element of order $\ell n$ in the class group, for $n\leq 16$ and $n\neq 2$, Here, $k\in\mathbb{Z}$, $\ell\in\mathbb{Z}_{>0}$ such that $gcd(k,~p)=1$, $k^{2}-p^{\ell}<0$ and $p,q$ are
primes. Moreover, $d$ is a positive square-free integer, while
$a,b\in\mathbb{Q}$ are the coefficients of the elliptic curve
\[
E:\quad y^2=x^3+ax+b.
\] This particular family stands out especially because the class number divisibility comes from geometric construction via elliptic curves. 
The lack of literature motivates the search for new infinite families of imaginary biquadratic fields with large class numbers.

In this manuscript, we obtain an infinite family of imaginary biquadratic fields with large class groups using Kuroda's formula \cite{kur} under appropriate hypotheses and by combining class-group-theoretic techniques. Let us begin by recalling the families constructed by Banerjee and Hoque \cite{KB2024}. 

\subsection{The family of Imaginary quadratic fields of Banerjee and Hoque \cite{KB2024}} 

They obtained an infinite family of imaginary quadratic fields with class numbers divisible by a prescribed odd integer. Their result (\cite{KB2024},~ Theorem $5.1$) is stated below:

\begin{prop}\label{prop1}
Assume that $k$ is a positive integer, $\ell \geq 3$ is an odd integer, and $p$ is an odd prime satisfying
\[
\gcd(k,p)=1 \quad \text{and} \quad k^2<p^{\ell}.
\]
Let $-d$ be the square-free part of the integer $k^2-p^{\ell}$, where $d>3$. Let the following conditions hold:
\begin{enumerate}
    \item
    \[
    k \not\equiv \pm 1 \pmod d.
    \]

    \item For every proper divisor $t$ of $k$ and every prime divisor $q$ of $\ell$,
    \[
    t^q \not\equiv \pm k \pmod d.
    \]

    \item Whenever
    \[
    d \equiv 3 \pmod 4, \qquad 3 \mid \ell,
    \]
    and $k'$ is an odd positive divisor of $k$, one has
    \[
    p^{\ell/3} \neq \frac{k'^3+2k}{3}.
    \]
\end{enumerate}
Then the ideal class group of the imaginary quadratic field
\[
\mathbb{F}_{\ell,k}(p)=\mathbb{Q}(\sqrt{-d})
\]
contains an element of exact order $\ell$.
\end{prop}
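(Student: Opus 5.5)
The plan is the standard construction for fields of the shape $\mathbb{Q}(\sqrt{k^2-p^\ell})$: produce an element of order dividing $\ell$, then use the hypotheses to rule out every smaller order. Write $k^2-p^\ell=-m^2 d$ with $d$ square-free. In $K=\mathbb{Q}(\sqrt{-d})$ we have
\[
(k+m\sqrt{-d})(k-m\sqrt{-d})=p^\ell .
\]
First I check that the ideals $(k+m\sqrt{-d})$ and $(k-m\sqrt{-d})$ are coprime. A common prime divisor would divide both $p$ and $2k$. Since $p$ is odd and $\gcd(k,p)=1$, no such prime exists. Hence $p$ splits in $K$, say $p\mathcal{O}_K=\mathfrak{p}\bar{\mathfrak{p}}$, and, choosing $\mathfrak{p}$ suitably,
\[
(k+m\sqrt{-d})=\mathfrak{p}^\ell .
\]
So $\mathfrak{p}^\ell$ is principal, and the class $[\mathfrak{p}]$ has order dividing $\ell$.

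Next suppose the order is a proper divisor of $\ell$. Then there is a prime $q\mid\ell$ with $\mathfrak{p}^{\ell/q}=(\alpha)$ principal. Since $d>3$, the only units are $\pm1$, so
\[
k+m\sqrt{-d}=\pm\alpha^q .
\]
Because $q$ is odd, the sign can be absorbed into $\alpha$. Write $\alpha=\frac{a+b\sqrt{-d}}{2}$, with $a,b\in\mathbb{Z}$, or with $a,b$ even when $d\not\equiv 3\pmod 4$. Taking norms gives $N(\alpha)=p^{\ell/q}$. Comparing rational and irrational parts gives a binomial identity: $2^q k$ equals $\sum_j\binom{q}{2j}a^{q-2j}(-d b^2)^j$, and a similar expression holds for $m$.

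The key step is to reduce this identity modulo $d$. This gives $2^q k\equiv a^q\pmod d$, or $k\equiv a'^q$ in the integral case. From the rational-part identity, $a$ divides $2^q k$ (every term except $a^q$-free ones carry $a$). Also $a^2+db^2=4p^{\ell/q}$, so $a$ is coprime to $p$. In the case $\alpha\in\mathbb{Z}[\sqrt{-d}]$, $a$ divides $k$, say $a=\pm t$ with $t\mid k$. If $t$ is a proper divisor of $k$, we get $t^q\equiv\pm k\pmod d$, which contradicts (2). If $t=k$, the identity forces $k^{q-1}\mid$ (remaining terms), and a size argument using $a^2<a^2+db^2=p^{\ell/q}$ together with $k^2<p^\ell$ should give $b=\pm1$. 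Then the identity $k=k^q-\binom{q}{2}k^{q-2}d+\dots$ reduces modulo $d$ to $k\equiv k^q$. The intended route is to show that this yields $k\equiv\pm1\pmod d$ via the norm relation, contradicting (1). I would try dividing by $k$ and using $k^2\equiv p^{\ell}$-type congruences modulo $d$.

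The main obstacle is the half-integral case $d\equiv3\pmod 4$ with $a,b$ odd. There the factor $2^q$ prevents a clean divisibility $a\mid k$. For $q\ge5$, a $2$-adic argument on $a^2+db^2\equiv 0\pmod 8$ should force $a,b$ even, which reduces to the case above. For $q=3$ the cube expansion is
\[
8k=a^3-3adb^2 .
\]
Using $db^2=4p^{\ell/3}-a^2$, this becomes $8k=4a^3-12ap^{\ell/3}$, that is, $2k=a^3-3ap^{\ell/3}$. So $a\mid 2k$ with $a$ odd, hence $a=\pm k'$ for an odd divisor $k'$ of $k$. Solving gives $p^{\ell/3}=\frac{k'^3\mp2k}{3k'}$. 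After sign normalisation and handling the degenerate $k'$ cases, this is exactly the situation excluded by (3).

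I expect the delicate bookkeeping to be the exact matching of the signs and normalisations in (3), and the step ruling out $t=k$. Conditions (1) and (2) handle the generic cases, and (3) handles the residual $q=3$ half-integral case. Together these show that $[\mathfrak{p}]$ has exact order $\ell$.
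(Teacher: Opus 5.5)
The paper does not prove this statement; it quotes it from Banerjee--Hoque \cite{KB2024}, so there is nothing to compare against. Your opening is the standard argument and is fine: coprimality of $(k\pm m\sqrt{-d})$, $(k+m\sqrt{-d})=\mathfrak{p}^\ell$, reduction to $k+m\sqrt{-d}=\alpha^q$, $a\mid 2^qk$, and the congruence modulo $d$ that lets (2) exclude $a=\pm t$ with $t$ a proper divisor.

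\textbf{The half-integral case $q=3$.} Here the proof breaks. You correctly reach $2k=a^3-3ap^{\ell/3}$, i.e.\ $3k'p^{\ell/3}=k'^3\mp 2k$ with $a=\pm k'$. But this is not what (3) excludes. Condition (3) only rules out $3p^{\ell/3}=k'^3+2k$, which agrees with your equation only for $k'=1$ and the upper sign. No sign normalisation closes this, and in fact the statement as printed is false.

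Take $k=9$, $p=5$, $\ell=3$. Then $k^2-p^\ell=-44=-2^2\cdot 11$, so $d=11\equiv 3\pmod 4$.
\begin{enumerate}
\item[(1)] holds, since $9\not\equiv\pm1\pmod{11}$.
\item[(2)] holds, since $1^3\equiv 1$ and $3^3\equiv 5$ are not congruent to $\pm 9\equiv 9,2\pmod{11}$.
\item[(3)] holds, since $(k'^3+18)/3\in\{19/3,\,15,\,249\}$ for $k'=1,3,9$, and none equals $5$.
\end{enumerate}
Yet $\mathbb{Q}(\sqrt{-11})$ has class number $1$. Indeed $\alpha=\frac{-3+\sqrt{-11}}{2}$ has norm $5$ and $\alpha^3=9+2\sqrt{-11}$; this is exactly your equation with $k'=3$.

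The other sign also occurs. For $k=65$, $p=53$, $\ell=3$ you get $d=43$ and $\alpha=\frac{13+\sqrt{-43}}{2}$ with $\alpha^3=65+58\sqrt{-43}$. Again (1)--(3) hold while $h(\mathbb{Q}(\sqrt{-43}))=1$. So your argument can only establish the statement with (3) replaced by $3k'p^{\ell/3}\neq k'^3\pm 2k$ for all odd $k'\mid k$.

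\textbf{The integral case $a=\pm k$.} This is a second genuine gap. The congruence only gives $k^{q-1}\equiv\pm1\pmod d$, which does not imply $k\equiv\pm1\pmod d$ (e.g.\ $2^2\equiv-1\pmod 5$). No size argument forces $b=\pm1$ either. What must be shown is $\frac{\alpha^q+\bar\alpha^q}{\alpha+\bar\alpha}\neq\pm1$ for $\alpha=\pm k+b\sqrt{-d}$ of norm $p^{\ell/q}$.
\begin{itemize}
\item For $q=3$ this reads $4k^2-3p^{\ell/3}=\pm1$. The value $-1$ is impossible modulo $3$. The value $+1$ gives $(2k-1)(2k+1)=3p^{\ell/3}$, which forces $k=2$ and $db^2=1$, contradicting $d>3$.
\item For $q\ge 5$, the identity $\alpha^q+\bar\alpha^q=\pm(\alpha+\bar\alpha)$ says $u_{2q}=\pm u_qu_2$ for the Lucas pair $(\alpha,\bar\alpha)$. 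So $u_{2q}$ has no primitive divisor. Ruling this out needs the Bilu--Hanrot--Voutier theorem and its table of defective pairs; the only candidates, at $2q=10$, have $d\le 3$ or odd trace. It does not follow from (1).
\end{itemize}

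\textbf{The $2$-adic step.} This is misstated. In fact $a^2+db^2=4p^{\ell/q}\equiv 4\pmod 8$, which allows $a,b$ both odd when $d\equiv 3\pmod 8$. The correct reason is that $2$ is then inert, so $(\mathcal{O}_K/2)^\times\cong\mathbb{F}_4^\times$ has order $3$. Hence $\alpha^q\in\mathbb{Z}[\sqrt{-d}]$ with $3\nmid q$ forces $\alpha\in\mathbb{Z}[\sqrt{-d}]$.
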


For ease, we set
\[
\mathcal{P}
=
\{
p \ |
\begin{array}{l}
p \text{ is an odd prime satisfying the hypotheses of Proposition~\ref{prop1}}
\end{array}
\}.
\] 
We will use this notation later. Now we shall recall the family due to Kishi \cite{KISHI2008}.

\subsection{The family of imaginary quadratic fields of Kishi \cite{KISHI2008}} 

 Kishi in \cite{KISHI2008} investigated arithmetic properties of imaginary quadratic fields whose discriminant consists of Fibonacci numbers. To state Kishi's result, we prepare some notations. Let $(F_n)$ and $(L_n)$ be the Fibonacci and Lucas sequences, respectively. These numbers are defined recusively as follows:
\[
F_{1}=1, ~F_{2}=1, ~F_{n+2}=F_{n+1}+F_{n} \qquad(n\geq 1).
\]
\[
L_{1}=1, ~L_{2}=3, ~L_{n+2}=F_{n+1}+F_{n} \qquad(n\geq 1).
\]
In particular, he proved the following theorem (\cite{KISHI2008}, Theorem 1).

\begin{prop}\label{prop2}
For every positive integer $s$, the class number of the quadratic field $\mathbb{Q}(\sqrt{-F_{50s+25}})$ is divisible by $5$, $F_n$ denotes the $n^{th}$ Fibonacci number.
\end{prop}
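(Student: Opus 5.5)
We want to show that $5 \mid h(\mathbb{Q}(\sqrt{-F_{50s+25}}))$ for every $s \geq 1$. The plan is to exhibit an explicit integral ideal whose class has order exactly $5$. We will find an identity of the shape
\[
 x^2 + F_n y^2 = 4 z^5 \quad\text{or}\quad x^2 + F_n = z^5, \qquad n = 50s+25,
\]
built from Fibonacci and Lucas numbers, and then apply the standard criterion for such identities. That criterion reads as follows: if $z$ is coprime to the discriminant, $\gcd(x,z)=1$, and $z$ is not a norm of an element of smaller size, then the ideal $\mathfrak{a}=(z,(x+\sqrt{-F_n})/2)$ (or its non-halved version) satisfies $\mathfrak{a}^5$ principal while $\mathfrak{a}$ is not. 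Since $5$ is prime, the class of $\mathfrak{a}$ then has order exactly $5$.

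\textbf{Step 1: find the identity.} Write $n=5m$ with $m=10s+5$ odd. We use the quintuplication formula
\[
F_{5m}=F_m\bigl(25F_m^4+25(-1)^mF_m^2+5\bigr),
\]
together with $L_m^2-5F_m^2=4(-1)^m=-4$. The aim is to express $-F_{5m}$, up to a square factor, as $x^2-4z^5$ with $z$ a small power or simple expression in Fibonacci or Lucas numbers of index related to $m$; the natural candidates are $z=F_{m}$ or $z=L_{2s+1}$-type quantities, chosen so that $z^5$ matches the leading term.

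The congruence $n\equiv 25 \pmod{50}$ should be exactly what makes this work:
\begin{itemize}
\item $5 \mid m$ gives $25 \mid n$, so $F_{25}$-type factors appear;
\item since $F_{25}=5^2\cdot 3001$, the index being an odd multiple of $25$ should make the square-free part of $F_n$ behave well;
\item $n\equiv 1 \pmod 4$ gives $-F_n\equiv 3 \pmod 4$ or $1 \pmod 4$ as needed.
\end{itemize}
Concretely, I would first work out the case $s=1$, $n=75$, numerically, and then lift the identity via the closed forms $\alpha^n,\beta^n$ with $\alpha\beta=-1$. Writing $F_n=(\alpha^n-\beta^n)/\sqrt5$ and factoring in $\mathbb{Z}[\alpha]$ is the cleanest route to a fifth-power identity.

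\textbf{Step 2: coprimality.} We verify $\gcd(x,z)=1$ and that $z$ is coprime to $F_n$. This uses the Fibonacci gcd rules:
\[
\gcd(F_a,F_b)=F_{\gcd(a,b)},\qquad \gcd(F_a,L_b)\in\{1,2,L_{\gcd(a,b)}\}.
\]
Then $\mathfrak{a}^5=((x+y\sqrt{-F_n})/2)$ is principal.

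\textbf{Step 3: $\mathfrak{a}$ is not principal.} This is the main obstacle. If $\mathfrak{a}$ were principal, we would have $4z = u^2 + F' v^2$ with $F'$ the square-free part of $F_n$. If $v\neq 0$, this forces $F'\le 4z$, and we need $z$ to be small relative to $F'$, roughly $z\approx F_n^{1/5}$. The difficulty is that the square-free part of $F_n$ could in principle be tiny. We handle this by noting that $F_n/F_{25}$ has a primitive prime divisor (Carmichael's theorem), and bounding the square part coming from $F_{25}$ and the structure of $n$. Alternatively, we work with the order of conductor $f$ and use the relevant class number relation.

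If instead $v=0$, then $z$ is a square. We exclude this by showing $z$ is not a perfect square, using Cohn's theorem that the only square Fibonacci numbers are $F_1,F_2,F_{12}$, and that the only square Lucas numbers are $L_1,L_3$.
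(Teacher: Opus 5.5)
The paper gives no proof of this statement; it is quoted from Kishi (\cite{KISHI2008}, Theorem 1). Judged on its own, your outline is a search plan rather than a proof. Everything hinges on an identity $x^2+F_ny^2=4z^5$ (or $x^2+F_n=z^5$) with $n=50s+25$, and Step 1 never produces one. You list candidate shapes for $z$ and propose to find the case $s=1$ numerically and then lift it, but no $x,y,z$ are ever written down. You also give no reason why an identity of that shape should exist for every $s$. Steps 2 and 3 therefore have nothing to act on. Some of the bookkeeping is also off: $n\equiv 2s+1\pmod 4$, so $n\equiv 1\pmod 4$ fails for odd $s$. And for $s\equiv 1\pmod 3$ the number $F_n$ is even (e.g.\ $2\,\|\,F_{75}$), so the ring of integers and the shape of the norm form change with $s$.

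The more serious problem is Step 3, which cannot be completed as you indicate even if an identity is found. Since $4z^5\ge F_n$, the size argument needs $\mathrm{sf}(F_n)>4z\ge 4(F_n/4)^{1/5}$. That is, the square part of $F_n$ must be at most roughly $F_n^{4/5}$. By Lengyel's formula, $v_p(F_n)=v_p(F_{z(p)})+v_p(n)$ for odd $p\neq 5$, and $v_5(F_n)=v_5(n)$. So apart from a factor of size at most about $n^2$, the square part of $F_n$ comes exactly from primes with $p^2\mid F_{z(p)}$, i.e.\ Wall--Sun--Sun primes. Whether such primes exist, let alone how much of $F_n$ they can occupy, is open. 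Hence no lower bound of the required strength on $\mathrm{sf}(F_n)$ is available. Carmichael's theorem does not help: it gives a primitive prime divisor, not one occurring to an odd power, and says nothing about size.

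Your fallback to the order $\mathcal{O}_f$ of conductor $f$, where $F_n$ itself enters the size bound, does not transfer to the maximal order $\mathcal{O}_{k_2}$ of $k_2=\mathbb{Q}(\sqrt{-F_n})$. The kernel of $\mathrm{Pic}(\mathcal{O}_f)\to\mathrm{Cl}(\mathcal{O}_{k_2})$ has order $\prod_{p^e\,\|\,f}p^{e-1}\bigl(p-\bigl(\tfrac{d_{k_2}}{p}\bigr)\bigr)$, since the unit index is $1$ here. Because $v_5(F_n)=2+v_5(2s+1)$, this order is divisible by $5$ whenever $5\mid 2s+1$. So a class of order $5$ in $\mathrm{Pic}(\mathcal{O}_f)$ may become trivial in $\mathrm{Cl}(\mathcal{O}_{k_2})$.

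An unconditional proof needs an argument that does not depend on the size of $\mathrm{sf}(F_n)$. One option is to construct an unramified cyclic quintic extension of $\mathbb{Q}(\sqrt{-F_n})$ directly, for instance as the Galois closure of a suitable dihedral quintic whose discriminant is $-F_n$ times a square.
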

The first result of this manuscript is a minor improvement of the above Proposition \ref{prop2} of Kishi, by showing that actually $10$ divides the class number of a subfamily of $\mathbb{Q}(\sqrt{-F_{50s+25}})$, where $s\geq 1$. This observation may be of independent interest.
\begin{theorem}\label{thm1}
The class number of the field $\mathbb{Q}(\sqrt{-F_{50s+25}})$ is divisible by $10$, where $s\geq 1$ is an integer satisfying the following conditions: \\
\hspace*{2.5cm}
$(1.1)$ \hspace*{0.5cm}$v_{5}(2s+1)\equiv1\pmod{2}$; \\
\hspace*{2.5cm}
$(1.2)$ \hspace*{0.5cm}$v_{3001}(2s+1)\equiv0\pmod{2}$.
\end{theorem}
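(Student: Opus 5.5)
The plan is to combine Proposition~\ref{prop2} with genus theory. Proposition~\ref{prop2} already gives $5 \mid h(\mathbb{Q}(\sqrt{-F_{50s+25}}))$. So it suffices to show that $2$ also divides the class number. Since $\gcd(2,5)=1$, this gives $10 \mid h$.

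For the factor $2$ I will use Gauss's genus theory. For an imaginary quadratic field with discriminant $D$, the $2$-rank of the class group equals $t-1$, where $t$ is the number of distinct primes dividing $D$. Write $F_{50s+25} = f^2 m$ with $m$ squarefree, so the field is $\mathbb{Q}(\sqrt{-m})$ and $D\in\{-m,-4m\}$. Every odd prime dividing $m$ divides $D$. Hence it is enough to exhibit two distinct odd primes dividing $m$, i.e.\ two primes appearing to an odd power in $F_{50s+25}$. The natural candidates are $5$ and $3001$, because $F_{25} = 75025 = 5^2\cdot 3001$ and $25 \mid 50s+25 = 25(2s+1)$.

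The key tool is the standard lifting-the-exponent property of the Fibonacci (Lucas) sequence. For an odd prime $q$ with $q \mid F_r$ and any $j\ge 1$,
\[
v_q(F_{rj}) = v_q(F_r) + v_q(j).
\]
For $q=5$ this takes the classical form $v_5(F_n)=v_5(n)$. I would either cite this or sketch its proof from the identity $F_{rj} = F_r \cdot \sum(\dots)$ via the binomial expansion of $\alpha^{rj}-\beta^{rj}$, in the usual LTE manner.

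Applying it with $r=25$ and $j=2s+1$ gives
\[
v_5(F_{50s+25}) = 2 + v_5(2s+1), \qquad v_{3001}(F_{50s+25}) = 1 + v_{3001}(2s+1).
\]
By hypothesis $(1.1)$ the first exponent is odd, and by $(1.2)$ the second is odd. Hence $5\cdot 3001 \mid m$, so $t \ge 2$, the $2$-rank is at least $1$, and $2 \mid h$.

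The main obstacle is justifying the valuation formula, especially for $q=3001$. This requires checking that $3001$ is prime, that it divides $F_{25}$ exactly once, and that the LTE argument applies (it needs $q$ odd and $q \mid F_r$). For $q=5$ the special case of the ramified prime of $\mathbb{Q}(\sqrt5)$ must be handled separately, but the formula $v_5(F_n)=v_5(n)$ is well known.
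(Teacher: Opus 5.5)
Your proposal is correct and follows the paper's proof essentially step for step. The paper likewise takes $5\mid h$ from Kishi's result (Proposition~\ref{prop2}). It then gets $2\mid h$ from genus theory after showing in Lemma~\ref{lemma3}, via Lengyel's valuation formulas (your LTE identity with $r=25$, $j=2s+1$), that $5$ and $3001$ occur to odd exponents in $F_{50s+25}$. Your explicit treatment of the discriminant as $-m$ or $-4m$ is slightly more careful than the paper, which leaves this implicit: it ensures that an even $F_{50s+25}$, which occurs when $s\equiv 1 \pmod 3$, causes no trouble.
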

Now we move into the main result of this manuscript, i.e., the construction of certain imaginary biquadratic fields with large class number. The following result shows that the two distinct families (that is, the quadratic fields contained inside the two families) are pairwise linearly disjoint.
\begin{theorem}\label{thm3}
Let $k$ and $\ell$ be positive integers with $\ell$ odd. Let \(s\) be a positive integer satisfying
$$
s\not\equiv 1\pmod 3.
$$
Set
$$
M=F_{50s+25}.
$$
Then there exist infinitely many primes \(p\) such that
$$
\gcd(k^2-p^\ell,M)=1.
$$
\end{theorem}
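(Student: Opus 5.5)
The plan is to reduce the statement to a congruence condition on $p$ modulo each prime divisor of $M$, solve these conditions simultaneously with the Chinese Remainder Theorem, and then invoke Dirichlet's theorem on primes in arithmetic progressions. Indeed, $\gcd(k^2-p^\ell,M)=1$ holds if and only if $p^\ell\not\equiv k^2 \pmod q$ for every prime $q\mid M$. So it suffices to find a single residue class $a \bmod M$ with $\gcd(a,M)=1$ such that $a^\ell\not\equiv k^2 \pmod q$ for all primes $q\mid M$. Every prime $p\equiv a\pmod M$ then works, and Dirichlet's theorem supplies infinitely many such $p$.

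The first step is to show that $M$ is odd, and this is exactly where the hypothesis $s\not\equiv 1\pmod 3$ is used. Recall that $2\mid F_n$ if and only if $3\mid n$. Since $50s+25\equiv 2s+1\pmod 3$, we have $3\mid 50s+25$ if and only if $s\equiv 1\pmod 3$. Hence under our hypothesis $M=F_{50s+25}$ is odd. This matters for the following reason: if $2\mid M$ and $k$ were odd, then $k^2-p^\ell$ would be even for every odd prime $p$, and the statement would fail.

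The second step handles each odd prime $q\mid M$ separately; we need a unit $a_q\in(\mathbb{Z}/q\mathbb{Z})^\times$ with $a_q^\ell\not\equiv k^2\pmod q$.
\begin{itemize}
\item If $q\mid k$, then $k^2\equiv 0 \pmod q$, so any unit works.
\item Otherwise, consider the map $x\mapsto x^\ell$ on the cyclic group $(\mathbb{Z}/q\mathbb{Z})^\times$ of order $q-1$. Its image has order $(q-1)/\gcd(\ell,q-1)$. Because $\ell$ is odd and $q-1$ is even, $\gcd(\ell,q-1)<q-1$, so the image has at least two elements. Therefore it cannot consist solely of the class of $k^2$, and some $a_q$ has $a_q^\ell\not\equiv k^2 \pmod q$.
\end{itemize}

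The third step combines the local choices. By the Chinese Remainder Theorem, pick $a$ modulo $\prod_{q\mid M} q$ with $a\equiv a_q\pmod q$ for each prime $q\mid M$. Any lift of $a$ to a class modulo $M$ is automatically coprime to $M$. Dirichlet's theorem then gives infinitely many primes $p\equiv a\pmod M$. For each such $p$ and each prime $q\mid M$ we get $p^\ell\equiv a_q^\ell\not\equiv k^2\pmod q$, so $\gcd(k^2-p^\ell,M)=1$.

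The only delicate point is the prime $2$, which is exactly what the congruence condition on $s$ excludes. The rest is routine.
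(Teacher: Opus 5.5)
Your proof is correct and follows essentially the same route as the paper (Lemma~\ref{lemma2} together with the proof of this theorem): oddness of $M$ from $s\not\equiv 1\pmod 3$, a local condition at each prime $q\mid M$, then the Chinese Remainder Theorem and Dirichlet's theorem. The only difference is the local step: the paper takes $p$ to be a quadratic non-residue modulo every $q$, so that $p^\ell$ is a non-residue ($\ell$ odd) and cannot be $\equiv k^2$, whereas you use the size of the image of $x\mapsto x^\ell$. Your separate treatment of $q\mid k$ is slightly more careful than the paper's write-up, which asserts $\left(\frac{p^\ell}{q}\right)=1$ even when $k^2\equiv 0\pmod q$; the paper's contradiction still goes through in that case.
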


\begin{theorem}\label{thm4}
Let $k$ and $\ell$ be positive integers satisfying the hypotheses of Proposition \ref{prop1}. For every $p\in\mathcal{P}$ obtained by Theorem \ref{thm3}~, the imaginary
biquadratic field
\[
K=\mathbb{Q}\left(\sqrt{k^2-p^\ell},\sqrt{-M}\right)
\]
has class number divisible by at least $5\ell$ or $10\ell$ depending on the unit index $q=1$ or $2$.
\end{theorem}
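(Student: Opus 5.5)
Apply Kuroda's class number formula to the imaginary biquadratic field $K=\mathbb{Q}(\sqrt{k^2-p^\ell},\sqrt{-M})$. Its three quadratic subfields are $k_1=\mathbb{F}_{\ell,k}(p)=\mathbb{Q}(\sqrt{-d})$, $k_2=\mathbb{Q}(\sqrt{-M})$, and the real quadratic field $k_3=\mathbb{Q}(\sqrt{dM'})$. Here $M'$ is the square-free part of $M$, and $d$ is the square-free part of $p^\ell-k^2$. For an imaginary biquadratic field Kuroda's formula gives
\[
h(K)=\frac{q}{2}\,h(k_1)\,h(k_2)\,h(k_3),
\]
where $q=[E_K:W_K E_{k_1}E_{k_2}E_{k_3}]\in\{1,2\}$ is the unit index. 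Two small corrections may be needed when $k_1$ or $k_2$ is $\mathbb{Q}(\sqrt{-1})$ or $\mathbb{Q}(\sqrt{-3})$, i.e.\ when extra roots of unity are present. These do not occur here, because $d>3$ and $M=F_{50s+25}$ is large and odd, which keeps $k_2\neq\mathbb{Q}(\sqrt{-1}),\mathbb{Q}(\sqrt{-3})$. So the plan is to bound each factor's divisibility and multiply.

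First I verify that $K$ really is biquadratic, i.e.\ that $k_1\neq k_2$. This is where Theorem \ref{thm3} is used. For $p$ supplied by Theorem \ref{thm3} we have $\gcd(k^2-p^\ell,M)=1$, so $\gcd(d,M')=1$. Since both $d>3$ and $M'>1$, the square-free parts differ, and the fields are distinct. I will need $M'>1$, i.e.\ that $F_{50s+25}$ is not a perfect square. This follows from the classical fact that the only square Fibonacci numbers are $0,1,144$. I will also need to intersect: the primes in Theorem \ref{thm3} must also lie in $\mathcal{P}$, as the statement of Theorem \ref{thm4} stipulates. This is the main obstacle. The infinitude of primes satisfying both conditions is not automatic. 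Since the theorem is phrased ``for every $p\in\mathcal{P}$ obtained by Theorem \ref{thm3}'', I treat such $p$ as given and only use both properties.

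Next, the divisibility of each factor. By Proposition \ref{prop1}, $\mathrm{Cl}(k_1)$ has an element of order $\ell$, so $\ell\mid h(k_1)$. By Proposition \ref{prop2}, $5\mid h(k_2)$. Moreover $M$ is odd: $3\nmid 50s+25$, so $F_{50s+25}$ is odd. Hence $k_2$ has discriminant divisible by at least one odd prime. I would also check via genus theory that $k_2$ is ramified at two primes (the prime $2$ or $\infty$ together with an odd prime), which gives $2\mid h(k_2)$ as well. Alternatively, one can restrict to the subfamily of Theorem \ref{thm1} to get $10\mid h(k_2)$. For $k_1$, genus theory similarly gives a factor of $2$ in $h(k_1)$ whenever $-d$ has discriminant with at least two prime factors. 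Combining this with $\ell$ odd gives $2\ell\mid h(k_1)$ in that case. I would use these factors of $2$ to absorb the $\tfrac12$ in Kuroda's formula.

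Finally, I assemble the pieces. Since $\gcd(\ell,5)$ may be nontrivial, I interpret ``divisible by $5\ell$'' through the product $h(k_1)h(k_2)$, which is divisible by $\ell\cdot 5$. From $h(K)=\tfrac q2 h(k_1)h(k_2)h(k_3)$, use one factor of $2$ coming from $h(k_1)$ or $h(k_2)$ (by genus theory) to cancel the denominator. Then $h(K)$ is divisible by $5\ell\cdot q\cdot(\text{remaining part})$. This gives $5\ell\mid h(K)$ when $q=1$. When $q=2$, the extra factor $2$ yields $10\ell\mid h(K)$. The delicate bookkeeping is making sure that exactly one power of $2$ is guaranteed to cancel the $\tfrac12$ without being double-counted. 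I would handle this by citing genus theory for $k_2=\mathbb{Q}(\sqrt{-M})$: since $M$ is odd and not $1$, the discriminant of $k_2$ has at least two prime divisors, so $2\mid h(k_2)$. Because $\gcd(2,5)=1$, this gives $10\mid h(k_2)$ unconditionally, and the desired bound follows.
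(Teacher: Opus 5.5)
Your architecture is right, and it differs from the paper's in one place: where the factor $2$ that cancels the $\tfrac12$ in Kuroda's formula comes from. The paper takes it from $h(k_3)$, the real field $\mathbb{Q}(\sqrt{(p^\ell-k^2)M})$; you take it from $h(k_2)$. That choice is fine, but your justification of $2\mid h(k_2)$ is a genuine gap. For an imaginary quadratic field the $2$-rank of the class group is $t-1$, where $t$ counts only the \emph{finite} primes dividing the discriminant. Ramification at $\infty$ therefore contributes nothing. Also, ``$M$ odd with $M'>1$'' does not force $t\ge 2$. If $M'$ were a prime $\equiv 3\pmod 4$, the discriminant of $k_2$ would be $-M'$ and $h(k_2)$ would be odd (compare $\mathbb{Q}(\sqrt{-7})$, which has $h=1$).

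The missing input is a congruence for $M$. Write $50s+25=2m+1$. Then $F_{2m+1}=F_m^2+F_{m+1}^2$ with $\gcd(F_m,F_{m+1})=1$, and $M$ is odd, so every prime factor of $M$ is $\equiv 1\pmod 4$. Hence $M'\equiv 1\pmod 4$ and the discriminant of $k_2$ is $-4M'$. It is divisible by $2$, and also by an odd prime because $M'>1$ by (2.P3). Genus theory then gives $2\mid h(k_2)$, so $10\ell\mid h(k_1)h(k_2)$ and $5\ell q\mid h(K)$, as you intended. The same fact gives $M'\ge 5$, and that (not ``$M$ large and odd'') is why $k_2\neq\mathbb{Q}(\sqrt{-1}),\mathbb{Q}(\sqrt{-3})$.

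Once repaired, your route is arguably cleaner than the paper's. It depends only on $M$, not on $p$, and it avoids the wide/narrow issue for real fields. There, two ramified primes only guarantee an even \emph{narrow} class number ($\mathbb{Q}(\sqrt{21})$ has $h=1$). So the paper's one-line genus argument for $k_3$ also needs a case analysis on $d \bmod 4$ that uses the same fact about the prime factors of $M$. With both factors of $2$ justified, one even gets $10\ell q\mid h(K)$.
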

We actually work with a subfamily of Kishi's family. The next result proves the infinitude of this subfamily..
\begin{theorem}\label{thm2}
    The family 
    \[
    \{\mathbb{Q}(\sqrt{-F_{50s+25}}): ~s\in \mathbb{Z}^{+},~ s\not\equiv 1\pmod{3} \},
    \]
    contains infinitely many distinct imaginary quadratic fields.
\end{theorem}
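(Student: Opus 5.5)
We need to show that infinitely many distinct fields $\mathbb{Q}(\sqrt{-F_{50s+25}})$ arise as $s$ ranges over positive integers with $s\not\equiv 1\pmod 3$. The plan is to use primitive prime divisors of Fibonacci numbers together with the multiplicativity structure of $\gcd(F_m,F_n)=F_{\gcd(m,n)}$. Two fields $\mathbb{Q}(\sqrt{-F_a})$ and $\mathbb{Q}(\sqrt{-F_b})$ coincide exactly when $F_a$ and $F_b$ have the same squarefree part. So it suffices to produce an infinite set of admissible indices $n=50s+25$ such that each new index contributes a prime dividing $F_n$ to an odd power, and that prime does not divide the earlier $F_m$ at all.

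\textbf{Step 1.} Invoke Carmichael's theorem (the primitive divisor theorem). For every $n\neq 1,2,6,12$, $F_n$ has a primitive prime divisor $q_n$, i.e.\ $q_n\mid F_n$ but $q_n\nmid F_m$ for $m<n$. Moreover, $q_n\mid F_m$ if and only if $n\mid m$, since the rank of apparition of $q_n$ is $n$. I will choose indices in which no later index is a multiple of an earlier one. A convenient choice is $n=25p$ with $p$ prime, $p\ge 7$ (so $p\neq 5$). Then $n=50s+25$ with $s=(p-1)/2$. The condition $s\not\equiv 1\pmod 3$ means $p\not\equiv 3\pmod 6$, which holds automatically for primes $p>3$. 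So all such $s$ are admissible, and for distinct primes $p,p'$ neither of $25p,25p'$ divides the other.

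\textbf{Step 2 (main obstacle).} We need the primitive prime $q_{25p}$ to divide $F_{25p}$ to an \emph{odd} power, so that it survives in the squarefree part. Mere existence of a primitive divisor does not give this. I would handle it by a pigeonhole/counting argument rather than controlling exponents.

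Suppose, for contradiction, that only finitely many distinct fields occur, say with squarefree parts $d_1,\dots,d_r$. Let $S$ be the finite set of primes dividing $d_1\cdots d_r$. For each $p$, every prime dividing $F_{25p}$ to an odd power lies in $S$. Since the primes $q_{25p}$ are pairwise distinct and $S$ is finite, for all but finitely many $p$ every primitive prime divisor of $F_{25p}$ appears to an even power. Hence the primitive part of $F_{25p}$ is a perfect square.

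To rule this out, I would use the known refinement: primitive divisors $q$ of $F_n$ satisfy $q\equiv \pm1\pmod n$, and $v_q(F_n)=v_q(F_{z(q)})$. Then I would appeal either to results on square-classes of Fibonacci numbers (Bugeaud–Mignotte–Siksek: $F_n=c\,y^2$ with $c$ composed of primes in $S$ has only finitely many solutions, as an $S$-unit-type finiteness) or, more elementarily, to Cohn's theorem that $F_n=\square$ or $2\square$ only for small $n$, extended via the finiteness of $S$-integral points. Indeed, $F_{25p}=c\,y^2$ with $c$ supported on $S$ gives $5F^2_{25p}\pm4=L^2_{25p}$. This yields finitely many curves $5c^2y^4\pm4=x^2$, each of genus one, with finitely many integral points by Siegel's theorem. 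This contradicts infinitely many $p$.

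\textbf{Step 3.} Conclude that infinitely many distinct squarefree parts, hence infinitely many distinct imaginary quadratic fields, occur in the family with $s\not\equiv1\pmod 3$.

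The crux is Step 2. I would first try the Siegel/quartic-curve argument, since it needs only finiteness of $S$.
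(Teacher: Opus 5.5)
Your proposal is correct. Its decisive step is the same as the paper's: assume finitely many fields, write $F_{25p}=c\,y^2$, use $L_n^2-5F_n^2=4(-1)^n$ to obtain an integral point on $x^2=5c^2y^4-4$, and invoke Siegel's theorem. You leave implicit that distinct $p$ give distinct points; this follows from monotonicity of $L_n$, as the paper says.

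The two arguments differ in how they control $c$.
\begin{itemize}
\item The paper uses $\gcd(F_m,F_n)=F_{\gcd(m,n)}$. For a prime $q>2r+1$, every prime of $\operatorname{sf}(F_{25q})$ must divide $F_{25}=5^2\cdot 3001$. Since $v_5(F_{25q})=2$, this leaves $F_{25q}=3001A_q^2$, so Siegel is needed only for the single explicit curve $Y^2=5\cdot 3001^2X^4-4$.
\item You let $c$ range over the finitely many squarefree integers supported on $S$. You apply Siegel to each of these finitely many genus-one curves and conclude by pigeonhole.
\end{itemize}

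Your route is shorter and more general. It needs neither the gcd identity nor the special shape $25p$ of the indices: any infinite set of admissible $s$ works. Since you allow $\pm 4$, it does not even use the parity of the index. It also covers $c=1$ automatically. The paper passes over that case when it concludes $\operatorname{sf}(F_{25q})=3001$; strictly it has only shown $\operatorname{sf}(F_{25q})\in\{1,3001\}$, which needs (2.P3) or $v_{3001}(F_{25q})=1$ to finish. The paper's route buys a single concrete Diophantine equation in place of an unspecified finite family.

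Your primitive-divisor material plays no role in the final argument and can be deleted. This includes Carmichael's theorem, the congruence $q\equiv\pm1\pmod n$, the conclusion that the primitive part is a square, and the appeal to Bugeaud--Mignotte--Siksek. Once you assume $\operatorname{sf}(F_{50s+25})$ takes finitely many values, the Siegel paragraph alone is a complete proof.
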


The manuscript is organized as follows. In \S 2, we recall the necessary preliminaries concerning Kuroda's class number formula, Fibonacci numbers, and their properties. \S 3 is devoted to proving the auxiliary lemmas required for the construction. In \S 4, we prove the main theorems \ref{thm1}~, \ref{thm2}~, \ref{thm3} and \ref{thm4} and construct the desired family of imaginary biquadratic fields. The main tool used here is Kuroda's class number formula for imaginary biquadratic fields. The following notations will be followed throughout. 
\begin{align*}
v_{p}(n) &: \text{ The $p$-adic valuation of an integer $n$, where $p$ is a rational prime} .\\
\left(\frac{r}{s}\right) &: \text{ Legendre symbol of $r$ over $s$, where $r,s \in \mathbb{Z}$ and $s$ is an odd prime }. \\
\left(a,b\right) &: \text{gcd of two integers a and b}. \\
L_{n} &: \text{ $n^{th}$ Lucas number}. \\
F_{n} &: \text{ $n^{th}$ Fibonacci number}.\\
sf(n) &: \text{ Squarefree part of $n$}.
\end{align*}

\section{Preliminaries}
We begin by recalling Kuroda's class number formula due to Lemmermeyer (\cite{kur}, Theorem 1) for an imaginary biquadratic field. Let $k$ be a number field and $K/k$ is a $V_4$-extension, i.e., a normal
extension with $\operatorname{Gal}(K/k)=V_4$, where $V_4$ is the Klein
four-group. The extension $K/k$ has three intermediate fields, say
$k_1$, $k_2$, and $k_3$. The result is as follows:

\begin{prop}\label{kuroda}
Let $K/k$ be a $V_4$-extension of number fields. Then Kuroda's class
number formula holds:
\begin{equation*}
h(K)=2^{\,d-\kappa-2-v}\,
q(K)\,
\frac{h_1h_2h_3}{h_k^2},
\end{equation*}
where $h_i$ $(i=1,2,3)$ denote the class numbers of the three
intermediate quadratic fields, $h_k$ is the class number of $k$, and
$q(K)$ is the unit index of $K/k$. Here $q(K) = (E_K : E_1E_2E_3)$, where $E_i$ is the unit groups of $k_i$ for $1\leq i\leq 3$ and $E_K$ is the unit group of $K$.

In particular,
\[
h(K)=
\begin{cases}
\dfrac{1}{4}\,q(K)\,h_1h_2h_3,
& \text{if } k=\mathbb{Q} \text{ and } K \text{ is real},\\[1.2ex]

\dfrac{1}{2}\,q(K)\,h_1h_2h_3,
& \text{if } k=\mathbb{Q} \text{ and } K \text{ is complex},\\[1.2ex]

\dfrac{1}{4}\,q(K)\,\dfrac{h_1h_2h_3}{h_k^2},
& \text{if } k \text{ is a complex quadratic extension of }
\mathbb{Q}.
\end{cases}
\]
\end{prop}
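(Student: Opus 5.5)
The plan is the classical analytic proof via an Artin–Brauer relation between Dedekind zeta functions, followed by a careful comparison of the regulator terms. Let $G=\operatorname{Gal}(K/k)=V_4$ with its three subgroups $H_i$ of order $2$, fixing $k_i$. Decompose the regular representation of $G$ into its four characters $1,\chi_1,\chi_2,\chi_3$, where $\chi_i$ is trivial on $H_i$. Then
\[
\zeta_K=\zeta_k\,L(s,\chi_1)L(s,\chi_2)L(s,\chi_3)\quad\text{and}\quad \zeta_{k_i}=\zeta_k\,L(s,\chi_i),
\]
which combine to the Brauer relation
\[
\zeta_K(s)\,\zeta_k(s)^2=\zeta_{k_1}(s)\zeta_{k_2}(s)\zeta_{k_3}(s).
\]
I take residues at $s=1$ on both sides and insert the analytic class number formula
\[
\operatorname{Res}_{s=1}\zeta_F=\frac{2^{r_1(F)}(2\pi)^{r_2(F)}h_FR_F}{w_F\sqrt{|D_F|}}.
\]
This gives an identity between $h_Kh_k^2$ and $h_1h_2h_3$, multiplied by a ratio of discriminants, of powers of $2$ and $2\pi$, of root-of-unity counts $w$, and of regulators.

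The discriminants and archimedean factors are routine. By the conductor–discriminant formula, $D_KD_k^2=D_1D_2D_3$ up to the appropriate powers of norms, so the square-root terms cancel. The factors $2^{r_1}(2\pi)^{r_2}$ are counted place by place. Let $d$ be the number of infinite places of $k$ that ramify in $K$. Each real place of $k$ contributes according to whether it stays real or becomes complex in $K$ and in the $k_i$. The net power of $\pi$ cancels, and a power of $2$ depending on $d$ and on the degree of $k$ remains. The quotient of the $w$'s is handled at the same time as the units, since $\mu_K$ sits inside $E_K$.

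The main obstacle is the regulator comparison. The aim is to prove
\[
\frac{R_1R_2R_3}{R_k^2}=2^{\,\kappa+v'}\,\frac{R_K}{(E_K:E_1E_2E_3)}
\]
with an explicit exponent, where $\kappa$ is the $\mathbb{Z}$-rank of $E_k$. To get this, I compute the regulator of the subgroup $E_1E_2E_3\subseteq E_K$ as $R_K\cdot(E_K:E_1E_2E_3)$, modulo torsion bookkeeping. Then I relate $\operatorname{Reg}(E_1E_2E_3)$ to $R_1R_2R_3$ using a basis of each $E_i$ that extends a basis of $E_k$. The overlaps $E_i\cap E_j=E_k$ cause the $R_k^2$ in the denominator. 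The embedding of $\log$-vectors into $K$ doubles the coordinates, which produces the factor $2^{\kappa}$. There is also a subtle point about units of $k$ that become squares in $K$, that is, whether $K=k(\sqrt{\varepsilon},\sqrt{\eta})$ for units $\varepsilon,\eta$. This is the source of the correction $v\in\{0,1\}$, and I would isolate it by comparing $E_k\cap K^{\times2}$ with $E_k^2$. Collecting all powers of $2$ yields $2^{d-\kappa-2-v}$, with $q(K)=(E_K:E_1E_2E_3)$.

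Finally I specialize. If $k=\mathbb{Q}$ and $K$ is complex, then $d=1$, $\kappa=0$ and $v=0$, giving $h(K)=\tfrac12 q(K)h_1h_2h_3$. If $k=\mathbb{Q}$ and $K$ is real, then $d=0$, giving the factor $\tfrac14$. If $k$ is imaginary quadratic, then $d=0$ and $\kappa=0$, giving $\tfrac14 q(K)h_1h_2h_3/h_k^2$.
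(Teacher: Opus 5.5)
\textbf{Verdict: genuine gap.} The paper gives no proof of this proposition. It quotes Theorem~1 of Lemmermeyer \cite{kur}, which is proved essentially along the analytic route you describe: Brauer relation, residues, analytic class number formula, then a unit comparison. Your first half is fine; in fact $|D_K|\,|D_k|^2=|D_1D_2D_3|$ holds exactly. The gap is that the whole content of Kuroda's formula lies in the step you call the main obstacle. There you assert the result (``collecting all powers of $2$ yields $2^{d-\kappa-2-v}$''), and the sketch is wrong at three concrete points.

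First, with your normalization of the residue the archimedean factors cancel identically, since $r_1(K)+2r_1(k)=\sum_i r_1(k_i)=6r_1(k)-4d$ and $r_2(K)+2r_2(k)=\sum_i r_2(k_i)=6r_2(k)+2d$. So the factor $2^{d}$ cannot come from there; it must come out of the regulators.

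Second, your displayed target has the index on the wrong side. $\operatorname{Reg}(E_1E_2E_3)$ is $R_K$ \emph{times} an index, so $R_1R_2R_3/R_k^2$ is proportional to $q(K)R_K$, not to $R_K/q(K)$. For $k=\mathbb{Q}$ and $K$ real, $R_K=4R_1R_2R_3/q(K)$. Your display would give $h(K)\propto q(K)^{-1}$.

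Third, a basis of $E_i$ modulo torsion extending a basis of $E_k$ need not exist. If $k_i=k(\sqrt{\varepsilon})$ with $\varepsilon$ a fundamental unit of $k$, then $\sqrt{\varepsilon}\in E_i$ and $E_k$ is not saturated in $E_i$. This is exactly the configuration that produces $v=1$, so the step your sketch skips is precisely where the correction term lives.

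What is needed is an actual computation. For instance, give $\mathbb{R}^{S_\infty(K)}$ the $G$-invariant inner product $\sum_w x_wy_w/[K_w:\mathbb{R}]$. Then, for the analogous inner products on their log spaces, the embeddings from $k$ and $k_i$ are similarities with ratios $2$ and $\sqrt2$. The isotypic decomposition $L^G\oplus L_1\oplus L_2\oplus L_3$ (with $L_i$ the $\chi_i$-part) is orthogonal. A covolume computation using $r_1(K)+r_2(K)=4(\kappa+1)-2d$ should then give
\[
\frac{R_1R_2R_3}{R_k^2R_K}=2^{\,d-\kappa-2}\,\frac{(\Lambda_K:\Lambda_0\oplus\Lambda_1^-\oplus\Lambda_2^-\oplus\Lambda_3^-)}{\prod_{i=1}^{3}(\Lambda_i:\Lambda_0\oplus\Lambda_i^-)}.
\]
Here $\Lambda_K,\Lambda_i,\Lambda_0$ are the log lattices of $E_K,E_i,E_k$, and $\Lambda_i^-=\Lambda_i\cap L_i$ is the lattice of relative units of $k_i/k$. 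It then remains to prove the algebraic identity
\[
\frac{(\Lambda_K:\Lambda_0\oplus\Lambda_1^-\oplus\Lambda_2^-\oplus\Lambda_3^-)}{\prod_{i=1}^{3}(\Lambda_i:\Lambda_0\oplus\Lambda_i^-)}\cdot\frac{w_Kw_k^2}{w_1w_2w_3}=2^{-v}\,q(K).
\]
For this you must pass from $\Lambda_K$ to $E_K$ through the torsion index $(W_K:W_K\cap E_1E_2E_3)$. You must also compute the kernel of the natural surjection
$\bigoplus_i\Lambda_i/(\Lambda_0\oplus\Lambda_i^-)\to(\Lambda_1+\Lambda_2+\Lambda_3)/(\Lambda_0\oplus\Lambda_1^-\oplus\Lambda_2^-\oplus\Lambda_3^-)$.
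This kernel is governed by the units $\eta\in E_i$ with $\eta^2\in E_kW_i$. A Kummer-theoretic analysis of these units, compared with $E_k\cap K^{\times 2}$ and combined with the root-of-unity indices, is what yields exactly $2^{-v}$. None of this is in your proposal.

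Finally, in the imaginary quadratic specialization you must also check $v=0$. This holds because $E_k$ is then finite cyclic, so $E_k/E_k^2$ has order $2$ and cannot generate a $V_4$-extension.
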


Now we list some properties of Fibonacci numbers and one important relation between Fibonacci and Lucas numbers. We refer the reader to \cite{koshy} for more details.

\begin{enumerate}[label=(2.P\arabic*)]
    \item $F_{n}$ is odd iff $n$ is not divisible by $3$.
    \item For a positive integer $m$, 
    \[
    5^{2}\mid F_{m} \iff 5^{2}\mid m.
    \].
    \item $F_{n}$ is a perfect square iff $n=1,2,12$.
    \item Let $n$ and $m$ are positive integers, then 
    \[
    (F_{n}, F_{m})=F_{(n,m)}.
    \]
    \item $L_{n}^{2}-5F_{n}^{2}=5(-1)^{2}$.
\end{enumerate}

\textbf{Remark A.} Property (2.P3) is proved by J.H.E. Cohn \cite{cohn}, while the identity (2.P5) was discovered in $1950$ by P. Schub. 

We also need the $p$-adic valuation of Fibonacci numbers. Lengyel \cite{len1995} considered the sequence of Fibonacci numbers $\{F_{n}\}_{n\geq 1}$ and proved the following:
\begin{prop}
    For each positive integer $s$ and each prime number, $p\neq 1, 5,$ we have  

\[
v_{2}(F_n)=
\begin{cases}
0 & \text{if } n \equiv 1, 2\pmod{3},\\

1 & \text{if } n \equiv 3\pmod{6}, \\

3 & \text{if } n \equiv 6\pmod{12},  \\

v_{2}(n)+2 & \text{if } n \equiv 0\pmod{12};
\end{cases}
\]

\hspace*{3.5cm}
$v_{5}(F_n)= v_{5}(n)$;

\[
\hspace*{1.5cm}
v_{p}(F_n)=
\begin{cases}
    v_{p}(n)+v_{p}(F_{z(p)}) & \text{if } n\equiv 0\pmod{z(p)}, \\

    0 & \text{if } n\not\equiv 0\pmod{z(p)}; \\
\end{cases}
\]
where $z(p)$ is the least positive integer such that $p\mid F_{z(p)}$, also known as 'the rank of apparition'.
\end{prop}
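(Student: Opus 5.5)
The plan is a lifting-the-exponent argument for the Fibonacci sequence, built on the multiplication identity coming from Binet's formula. With $\alpha=\frac{1+\sqrt5}{2}$ and $\beta=\frac{1-\sqrt5}{2}$ we have $\alpha^n=F_n\alpha+F_{n-1}$. Raising this to the $k$-th power, expanding binomially and taking the ``$\alpha$-coefficient'' gives
\[
F_{kn}=\sum_{j=1}^{k}\binom{k}{j}F_n^{\,j}F_{n-1}^{\,k-j}F_j .
\]
I will also use three standard facts:
\begin{enumerate}
\item $(F_n,F_{n-1})=1$;
\item (2.P4), which gives $p\mid F_n \iff z(p)\mid n$ for every prime $p$;
\item $F_{2n}=F_nL_n$.
\end{enumerate}
The case $n\not\equiv 0\pmod{z(p)}$ is then immediate from fact 2. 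Moreover, fact 2 shows that $\{n: p\mid F_n\}$ is exactly the set of multiples of $z(p)$. So it remains to compute $v_p(F_{z(p)m})$ by induction on the prime factorisation of $m$.

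\textbf{Step 1 (odd $p\neq 5$, multiplying by $k$ coprime to $p$).} Suppose $p\mid F_n$ and $p\nmid k$. In the sum above every term with $j\ge2$ is divisible by $F_n^2$. The $j=1$ term is $kF_nF_{n-1}^{k-1}$, and $p\nmid kF_{n-1}$. Hence $F_{kn}/F_n\equiv kF_{n-1}^{k-1}\not\equiv0\pmod p$, so $v_p(F_{kn})=v_p(F_n)$.

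\textbf{Step 2 (multiplying by $p$ itself).} Take $k=p$. The $j=1$ term contributes exactly one extra factor of $p$. For $2\le j\le p-1$ the terms are divisible by $pF_n^2$, hence by $p^2F_n$. The last term is $F_n^pF_p$, which is divisible by $F_n^3$ and therefore by $p^2F_n$ once $p\ge3$. So $v_p(F_{pn})=v_p(F_n)+1$.

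Combining Steps 1 and 2, $v_p(F_{z(p)m})=v_p(F_{z(p)})+v_p(m)$. Since $z(p)\mid p\pm1$ for $p\neq 5$, we have $v_p(z(p))=0$, and so this equals $v_p(n)+v_p(F_{z(p)})$ with $n=z(p)m$.

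\textbf{Step 3 ($p=5$).} Here $z(5)=5$ and $v_5(F_5)=1$. The $j=1$ argument of Step 1 still works for $5\nmid k$. For $k=5$, the explicit identity
\[
F_{5n}=F_n\bigl(25F_n^4+25(-1)^nF_n^2+5\bigr)
\]
shows that the bracket has $5$-adic valuation exactly $1$. Hence $v_5(F_n)=v_5(n)$.

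\textbf{Step 4 ($p=2$).} This is the step I expect to be the main obstacle, since Step 2 fails for $p=2$: the term $F_n^2F_2$ is only divisible by $F_n^2$, which is not enough. I will instead work directly.
\begin{itemize}
\item The sequence $F_n \bmod 8$ has period $12$. Reading off $v_2(F_n)$ for $n\le 12$ gives the first three cases: $v_2=0$ when $3\nmid n$, $v_2=1$ when $n\equiv3\pmod6$, and $v_2=3$ when $n\equiv6\pmod{12}$.
\item For $12\mid n$, use $F_{2n}=F_nL_n$ together with the fact that $L_n\equiv 2\pmod 4$ when $6\mid n$. This last fact follows from (2.P5) and the periodicity of $L_n \bmod 8$, and it yields $v_2(F_{2n})=v_2(F_n)+1$.
\item Odd multipliers are handled by Step 1, whose argument works for any $p$ when $k$ is odd.
\item An induction starting from $v_2(F_{12})=v_2(144)=4=v_2(12)+2$ then gives $v_2(F_n)=v_2(n)+2$ for all $n\equiv 0\pmod{12}$.
\end{itemize}
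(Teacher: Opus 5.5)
The paper gives no proof of this proposition; it quotes it from Lengyel \cite{len1995}. Your lifting-the-exponent argument via $F_{kn}=\sum_{j}\binom{k}{j}F_n^{j}F_{n-1}^{k-j}F_j$ is therefore a self-contained alternative, and Steps 1--3 are correct. There is one genuine gap, in Step 4. Knowing $F_n \bmod 8$ determines $v_2(F_n)$ only when $v_2(F_n)\le 2$. For $n\equiv 6\pmod{12}$, periodicity mod $8$ gives only $F_n\equiv F_6=8\equiv 0\pmod 8$, i.e.\ $v_2(F_n)\ge 3$, not $v_2(F_n)=3$. The same reasoning would wrongly give $v_2(F_{24})=v_2(F_{12})=4$, whereas in fact $v_2(F_{24})=5$. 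The cases $3\nmid n$ and $n\equiv 3\pmod 6$ are fine, since there the residue mod $8$ is odd or $\equiv 2$.

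The repair uses a tool you already have. Write $n=6m$ with $m$ odd and apply Step 1 with $p=2$, $k=m$ and base index $6$ (note $2\mid F_6$); this gives $v_2(F_{6m})=v_2(F_6)=3$. Alternatively, pass to $F_n \bmod 16$, which has period $24$, and check that $F_6\equiv F_{18}\equiv 8\pmod{16}$.

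A smaller point: (2.P5) as printed in the paper, $L_n^2-5F_n^2=5(-1)^2$, is a misprint. The identity you need is $L_n^2-5F_n^2=4(-1)^n$, and with it your doubling step is immediate without any periodicity of $L_n$. If $6\mid n$ then $8\mid F_n$, so $L_n^2=4+5F_n^2\equiv 4\pmod{64}$. Hence $v_2(L_n)=1$ and $v_2(F_{2n})=v_2(F_n)+1$. With these two fixes, all four $2$-adic cases, and hence the whole proposition, follow from your argument.
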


\section{Few Lemmas}
We begin by proving some lemmas that are required for proving the aforementioned results.
\begin{lemma}\label{lemma2}
Let $M=F_{50s+25}$ for every fixed positive integer $s$ with $s\not\equiv 1\pmod{3}$. Suppose $q_{1}, q_{2}, \cdots, q_{r}$ are distinct prime divisors of $M$. Then there exist infinitely many primes $p$ such that
\[
\left(\frac{p}{q_{i}}\right)= -1 ,
\]
for every $1\leq i\leq r$.
\end{lemma}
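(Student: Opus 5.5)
The plan is to combine the Chinese Remainder Theorem with Dirichlet's theorem on primes in arithmetic progressions. First observe that every prime divisor of $M$ is odd. Indeed $50s+25$ is odd, so $50s+25\not\equiv 0\pmod{6}$, and in fact the hypothesis $s\not\equiv 1\pmod 3$ gives $50s+25\equiv 2s+1\not\equiv 0\pmod 3$. By (2.P1), $F_{50s+25}$ is therefore odd (equivalently, $v_2(F_n)=0$ by Lengyel's formula). Hence each $q_i$ is an odd prime and the Legendre symbol $\left(\frac{\cdot}{q_i}\right)$ makes sense. This is exactly where the hypothesis $s\not\equiv 1\pmod 3$ is used.

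Next, for each $i$ I will choose a quadratic non-residue $a_i$ modulo $q_i$. Such an element exists because $q_i$ is odd, so exactly $(q_i-1)/2\ge 1$ of the nonzero residues are non-residues. The $q_i$ are distinct primes, so the Chinese Remainder Theorem gives an integer $a$ with $a\equiv a_i\pmod{q_i}$ for all $1\le i\le r$. Since no $a_i$ is divisible by $q_i$, we have $\gcd(a,Q)=1$, where $Q=q_1q_2\cdots q_r$.

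Dirichlet's theorem then gives infinitely many primes $p\equiv a\pmod Q$. For each such $p$ we have $p\equiv a_i\pmod{q_i}$, and therefore $\left(\frac{p}{q_i}\right)=\left(\frac{a_i}{q_i}\right)=-1$ for every $i$. Since only finitely many primes divide $Q$, we may also discard those, so every remaining $p$ is coprime to $M$.

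There is no serious obstacle here. The only subtle point is ensuring that $2\nmid M$, since the Legendre symbol modulo $2$ is meaningless and no non-residue exists there; the congruence condition on $s$ handles this through (2.P1). If one prefers not to fix specific $a_i$, an alternative is to count: the residues modulo $Q$ with all symbols equal to $-1$ number $\prod_i (q_i-1)/2>0$, and Dirichlet's theorem applies to any one of them.
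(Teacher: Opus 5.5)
Your proposal is correct and follows the paper's proof essentially step for step. Both arguments get oddness of $F_{50s+25}$ from $50s+25\equiv 2s+1\not\equiv 0\pmod 3$ and (2.P1), then choose non-residues $a_i$ glued by the Chinese Remainder Theorem, and finish with Dirichlet's theorem on the progression $a \bmod q_1\cdots q_r$. (Your step discarding primes that divide $Q$ is harmless but unnecessary, since any prime $p\equiv a\pmod Q$ with $\gcd(a,Q)=1$ is automatically coprime to $Q$.)
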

\begin{proof}
 $F_{50s+25}$ being a Fibonacci number, is even if and only if its index is divisible by $3$. So it suffices to show that
\[
3\nmid (50s+25).
\]
Indeed,
\[
50s+25\equiv 2s+1 \pmod{3}.
\]
If $3\mid (50s+25)$, then
\[
2s+1\equiv 0 \pmod{3},
\]
which implies
\[
s\equiv 1\pmod{3},
\]
contradicting the hypothesis. Hence $3\nmid(50s+25)$, and therefore $F_{50s+25}$ is odd. Consequently, every prime divisor $q_i$ of $M$ is an odd prime.

For each $i=1,2,\ldots,r$, choose an integer $a_i$ such that
\[
\left(\frac{a_i}{q_i}\right)=-1.
\]
Such a choice is possible since exactly half of the nonzero residue classes modulo an odd prime are quadratic non-residues.

Since the primes $q_1,q_2,\ldots,q_r$ are pairwise coprime, the Chinese Remainder Theorem guarantees the existence of an integer $a$ satisfying
\[
a\equiv a_i \pmod{q_i}, \qquad i=1,2,\ldots,r.
\]
Since
\[
\left(\frac{a_i}{q_i}\right)=-1,
\]
we have $q_i\nmid a_i$ for every $i$. As $a\equiv a_i\pmod{q_i}$, it follows that $q_i\nmid a$ for every $i$. Hence
\[
(a,q_1q_2\cdots q_r)=1.
\]

By Dirichlet's theorem on primes in arithmetic progressions, there exist infinitely many primes $p$ such that
\[
p\equiv a \pmod{q_1q_2\cdots q_r}.
\]
Therefore,
\[
p\equiv a_i \pmod{q_i}, \qquad i=1,2,\ldots,r.
\]
Since the Legendre symbol depends only on the residue class modulo $q_i$, we obtain
\[
\left(\frac{p}{q_i}\right)
=
\left(\frac{a_i}{q_i}\right)
=
-1,
\qquad i=1,2,\ldots,r.
\]
Therefore, there exist infinitely many primes $p$ such that
\[
\left(\frac{p}{q_i}\right)=-1,\qquad 1\le i\le r.
\]
\end{proof}

\begin{lemma}\label{lemma3}
Let $s\ge1$ be an integer satisfying
\[v_{5}(2s+1)\equiv1\pmod{2}
\quad\text{and}\quad
v_{3001}(2s+1)\equiv 0\pmod{2}.
\]
Then sf($F_{50s+25}$) is divisible by at least two distinct prime numbers.
\end{lemma}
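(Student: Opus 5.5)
The plan is to locate two specific primes, $5$ and $3001$, that each divide $F_{50s+25}$ to an odd power; each then divides $sf(F_{50s+25})$. Both come from the factorization of $F_{25}$ and the $p$-adic valuation formulas of Lengyel recalled in Section 2. Throughout we write $n=50s+25=25(2s+1)$.

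\emph{The prime $5$.} By Lengyel's formula $v_{5}(F_n)=v_{5}(n)$, so
\[
v_{5}(F_{50s+25})=v_{5}(25)+v_{5}(2s+1)=2+v_{5}(2s+1).
\]
This is odd by hypothesis $v_{5}(2s+1)\equiv 1 \pmod 2$. Hence $5 \mid sf(F_{50s+25})$.

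\emph{The prime $3001$.} First compute $F_{25}=75025=5^{2}\cdot 3001$. Next check that $3001$ is prime by trial division up to $\lfloor\sqrt{3001}\rfloor=54$. Then determine the rank of apparition: $z(3001)$ must divide $25$, and $F_1=1$, $F_5=5$ are not divisible by $3001$, so $z(3001)=25$. This gives $v_{3001}(F_{z(3001)})=v_{3001}(F_{25})=1$. Since $25\mid n$, Lengyel's formula for primes $p\neq 2,5$ yields
\[
v_{3001}(F_{50s+25})=v_{3001}(n)+1=v_{3001}(2s+1)+1,
\]
using $3001\nmid 25$. This is odd by hypothesis $v_{3001}(2s+1)\equiv 0\pmod 2$, so $3001\mid sf(F_{50s+25})$.

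Combining the two steps, the two distinct primes $5$ and $3001$ both divide $sf(F_{50s+25})$, which proves the lemma. The only step needing care is the arithmetic input: the factorization $F_{25}=5^2\cdot 3001$, the primality of $3001$, and the identification $z(3001)=25$. These are finite checks. Everything else is a direct application of the valuation formulas already stated.
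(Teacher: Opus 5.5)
Your proof is correct and is essentially the paper's argument: you identify $5$ and $3001$ as primes dividing $F_{50s+25}$ to the odd exponents $2+v_5(2s+1)$ and $1+v_{3001}(2s+1)$, using Lengyel's valuation formulas. You also justify two facts the paper only asserts, namely the primality of $3001$ and $z(3001)=25$ (via $z(p)\mid m$ whenever $p\mid F_m$), and you correctly state Lengyel's condition as $p\neq 2,5$ where the paper has a typo.
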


\begin{proof}
Let
\[
n=50s+25=25(2s+1).
\]
Since $5\mid n$, the $5$-adic valuation formula for Fibonacci numbers yields
\[
v_{5}(F_n)=v_{5}(n)
         =2+v_{5}(2s+1).
\]
As  $v_{5}(2s+1)$ is odd, $5$ divides  sf($F_n$).

Since 
\[
F_{25}=5^{2}\cdot3001,
\]
and $3001$ is a primitive prime divisor of $F_{25}$, the rank of apparition of $3001$ is
\[
z(3001)=25.
\]
Now appealing to Lengyel's valuation formula, we derive,
\begin{equation*}
v_{3001}(F_n)
=
v_{3001}(F_{25})
+
v_{3001}\!\left(n\right).
\end{equation*}
Hence
\begin{equation*}
v_{3001}(F_n)
=
1
+
v_{3001}\!\left(2s+1\right).
\end{equation*}
Since $v_{3001}(2s+1)\equiv 0\pmod{2}$, the right-hand side of the above identity is odd. Therefore, $3001$ is a divisor of $F_{n}$ with an odd exponent. Hence, it survives in sf($F_{n}$).    
Therefore, the square-free part sf$F_(n)$ is divisible by at least two distinct prime numbers.
\end{proof}

\section{Proof of the Theorems}

We begin by aligning the notation of Kuroda's formula \ref{kuroda} introduced in \S 2 with the fields considered in this paper. In our framework, the base field $k$ is $\mathbb{Q}$ and $K=\mathbb{Q}(\sqrt{k^{2}-p^{\ell}},~\sqrt{-F_{50s+25}})$, subject to specific constraints on the integer parameters $\ell, k, ~p$ and $s$. Certainly, $K/k$ is a $V_4$ extension and has three subfields $k_1, k_2$ and $k_3$ given in the following figure:
\begin{figure}[H]
\centering
\begin{tikzpicture}[
    every node/.style={inner sep=2pt},
    every path/.style={thin}
]

\node (K) at (0,3.0)
    {$K=\mathbb{Q}\left(\sqrt{k^2-p^\ell},\sqrt{-F_{50s+25}}\right)$};

\node (k1) at (-4.5,0.8)
    {$k_1=\mathbb{Q}\left(\sqrt{k^2-p^\ell}\right)$};

\node (k2) at (0,0.8)
    {$k_2=\mathbb{Q}\left(\sqrt{-F_{50s+25}}\right)$};

\node (k3) at (5.0,0.8)
    {$k_3=\mathbb{Q}\left(\sqrt{-F_{50s+25}(k^2-p^\ell)}\right)$};

\node (Q) at (0,-1.0)
    {$\mathbb{Q}$};

\draw (K.south west) -- (k1.north);
\draw (K.south) -- (k2.north);
\draw (K.south east) -- (k3.north);

\draw (k1.south) -- (Q.north west);
\draw (k2.south) -- (Q.north);
\draw (k3.south) -- (Q.north east);

\end{tikzpicture}
\caption{Subfields of $K/{k}$.}
\label{fig:subfields}
\end{figure}
With these notations in place, we proceed to prove the theorems.

\begin{proof}[Proof of Theorem \ref{thm1}]
   Consider
   \begin{equation*}
       k_{2}=\mathbb{Q}(\sqrt{-F_{50s+25}}),
   \end{equation*}
where $s$ is positive integer given in Theorem \ref{thm1}. By Lemma \ref{lemma3}, it is clear that the discriminant $\Delta_{k_{2}}$ of the quadratic field $k_{2}$ has at least two prime factors. Hence, by Gauss's genus theory (\cite{cox}, Theorem 6.1), the $2-$rank of the class group of $k_{2}$ is at least $1$. Hence $10$ divides the class number of $k_{2}$ as it was shown to be divisible by $5$ by Kishi.
\end{proof}

\begin{proof}[Proof of the Theorem \ref{thm3}]
Let
\[
M=F_{50s+25},
\]
and let
\[
q_1,q_2,\ldots,q_r
\]
be the distinct prime divisors of $M$.
By Lemma \ref{lemma2}~, there exist infinitely many primes $p$ satisfying
\[
\left(\frac{p}{q_i}\right)=-1,
\qquad
1\le i\le r.
\]
We claim that every such prime $p$ satisfies
\[
\gcd(k^2-p^l,M)=1.
\]
Suppose, on the contrary, that
\[
d=\gcd(k^2-p^l,M)>1.
\]
Then $d$ has an odd prime divisor $q$, since $M$ is odd. Moreover,
\[
q\mid M,
\]
so $q=q_i$ for some $i$.
Since
\[
q\mid(k^2-p^l),
\]
we have
\[
p^l\equiv k^2\pmod q.
\]
Hence
\[
\left(\frac{p^l}{q}\right)=1.
\]
Using the multiplicativity of the Legendre symbol, we have 
\[
\left(\frac{p^l}{q}\right)
=
\left(\frac{p}{q}\right)^l.
\]
Since
\[
\left(\frac{p}{q}\right)=-1
\]
by Lemma \ref{lemma2} and $l$ is odd,
\[
\left(\frac{p}{q}\right)^l
=
(-1)^l
=-1,
\]
contradicting the previous equality.
Therefore,
\[
\gcd(k^2-p^l,M)=1.
\]
Since Lemma \ref{lemma2} provides infinitely many such primes $p$, the result follows.
This proves the theorem. 
\end{proof}

\begin{proof}[Proof of the Theorem \ref{thm4}]
Clearly, $(k^{2}-p^{l})M$ has at least two prime divisors for each $p\in \mathcal{P}$ obtained by theorem \ref{thm3}. Hence, by Gauss's genus theory, the $2-$rank of the class group of the real quadratic field $\mathbb{Q}(\sqrt{-(k^{2}-p^{l})M})$ is at least $1$. Therefore, the class number of $\mathbb{Q}(\sqrt{-(k^{2}-p^{l})M})$ is divisible by $2$. Now, by Kuroda's formula, the imaginary biquadratic field $K$ has the class number divisible by at least $5\ell$ or $10\ell$ depending on the unit index $q=1$ or $q=2$. This completes the proof.
\end{proof}

\begin{proof}[Proof of Theorem \ref{thm2}]
We adapt the similar argument as that of Kishi (\cite{KISHI2008},~Theorem 2) used to prove the infinitude of the
family
\[
\left\{
\mathbb{Q}\left(\sqrt{-F_{50s+25}}\right):s\geq 0
\right\}.
\]
Suppose, to the contrary, that
\[
\left\{
\mathbb{Q}\left(\sqrt{-F_{50s+25}}\right)
:\ s\geq 1,\ s\not\equiv 1\pmod{3}
\right\}
\]
is finite. For an integer $n$, denote its square-free part by
$\operatorname{sf}(n)$. Then the set
\[
\mathcal{P}
=
\bigcup_{\substack{s\geq1\\s\not\equiv1\pmod3}}
\left\{
p:\ p\mid\operatorname{sf}(F_{50s+25})
\right\}
\]
is finite. Hence, there exists a positive integer $r$ such that
\[
\mathcal{P}
=
\bigcup_{\substack{1\leq s\leq r\\s\not\equiv1\pmod3}}
\left\{
p:\ p\mid\operatorname{sf}(F_{25(2s+1)})
\right\}.
\]
Let us choose a prime $q>2r+1$ with $q\neq5$. Since $q$ is an odd prime,
\[
q\equiv1\ \text{or}\ 5\pmod6,
\]
and consequently
\[
\frac{q-1}{2}\equiv0\ \text{or}\ 2\pmod3.
\]
Thus
\[
s_0=\frac{q-1}{2}
\]
satisfies $s_0\not\equiv1\pmod3$, and
\[
F_{50s_0+25}=F_{25q}.
\]
Therefore, every prime divisor $d$ of
$\operatorname{sf}(F_{25q})$ belongs to $\mathcal{P}$. Hence, for
some $s$, with
\[
1\leq s\leq r,\qquad s\not\equiv1\pmod3,
\]
we have
\[
d\mid F_{25(2s+1)}.
\]
Since $q>2r+1$ is a prime, we have
\[
(25(2s+1),25q)=25.
\]
It follows from the identity (2.P1) that
\[
\left(F_{25(2s+1)},F_{25q}\right)
=F_{25}
=5^2\cdot3001.
\]
Thus
\[
d\in\{5,3001\}.
\]
Moreover, since $25\mid25q$, by the property~(2.P2), we get
\[
5^2\mid F_{25q}.
\]
Hence $5$ does not divide $\operatorname{sf}(F_{25q})$, and therefore
\[
\operatorname{sf}(F_{25q})=3001.
\]
Thus
\[
F_{25q}=3001A_q^2
\]
for some $A_q\in\mathbb{Z}$.
Now, using the Fibonacci-Lucas identity~(2.P5)
\[
L_n^2-5F_n^2=4(-1)^n
\]
with $n=25q$, we obtain,
\[
L_{25q}^2-5F_{25q}^2=-4.
\]
Consequently,
\[
L_{25q}^2-5\cdot3001^2A_q^4=-4,
\]
or equivalently,
\[
L_{25q}^2
=
5\cdot3001^2A_q^4-4.
\]
Hence
\[
(A_q,L_{25q})
\]
is an integral solution of the quartic equation
\[
Y^2=5\cdot3001^2X^4-4.
\]
Since the Lucas sequence $(L_{n})_{n\geq 1}$ is strictly increasing for $n\geq 1$, the values $L_{25q}$ are distinct as $q$ ranges over primes. On the
other hand, by Siegel's theorem, the above quartic equation has only
finitely many integral solutions. This contradiction proves that
\[
\left\{
\mathbb{Q}\left(\sqrt{-F_{50s+25}}\right)
:\ s\geq1,\ s\not\equiv1\pmod3
\right\}
\]
is infinite.
\end{proof}

\textbf{Remark B.} In theorem \ref{thm4}~, it would be interesting to find arithmetic conditions on the discriminants of $\mathbb{Q}(\sqrt{k^{2}-p^{\ell}})$ and $\mathbb{Q}(\sqrt{-F_{50s+25}})$, to determine when exactly the unit index $q$ is $1$ or $2$. 

\textbf{Concluding Remark.} There are several known families with  class number divisibility for a particular or an arbitrary number $n$. Due to Kuroda's formula and by the construction of the biquadratic field shown in this paper, one can generate many more families of imaginary biquadratic fields with large class numbers. 

\section*{Acknowledgment}

The authors express their gratitude to SRM University-AP for providing the academic environment, resources, and institutional support to carry out this research.


\vspace*{2mm}
\begin{thebibliography}{10}

\bibitem{KB2024} Banerjee, K., Hoque, A., Chow groups, pull back and class groups. Monatsh Math 205, 433–454 (2024). https://doi.org/10.1007/s00605-024-02008-3.

\bibitem{KISHI2008} Kishi, Y., A new family of imaginary quadratic fields whose class number is divisible by five. J. Number Theory 128, 2450–2458 (2008). https://doi.org/10.1016/j.jnt.2008.02.016.

\bibitem{KB2025} Banerjee, K., Chakraborty, K. and Ghosh, A. Class groups of imaginary biquadratic fields. Res. number theory 11, 102 (2025). https://doi.org/10.1007/s40993-025-00685-z.

\bibitem{KC2018} K. Chakraborty, A. Hoque, Y. Kishi, P.P. Pandey,
Divisibility of the class numbers of imaginary quadratic fields,
Journal of Number Theory,
185,
2018,
339-348,
ISSN 0022-314X,
https://doi.org/10.1016/j.jnt.2017.09.007.

\bibitem{cohn2002}  J.H.E. Cohn, On the class number of certain imaginary quadratic fields, Proc. Amer. Math. Soc.
130 (2002) 1275–1277. https://doi.org/10.1090/S0002-9939-01-06255-4.

\bibitem{kishi2011} Ishii, K., On the divisibility of the class number of imaginary quadratic fields, Proc. Japan Acad.
Ser. A 87 (2011) 142–143. https://doi.org/10.3792/pjaa.87.142.

\bibitem{sundar2000}  K. Soundararajan, Divisibility of class numbers of imaginary quadratic fields, J. Lond. Math. Soc.
61 (2000) 681–690. https://doi.org/10.1112/S0024610700008887.

\bibitem{1sime1995} Sime, Patrick J., On the Ideal Class Group of Real Biquadratic Fields. Transactions of the American Mathematical Society 347, 12 (1995): 4855–76. https://doi.org/10.2307/2155066.

\bibitem{2sime1995} Sime, Patrick J.,
Hilbert Class Fields of Real Biquadratic Fields,
Journal of Number Theory,
50, Issue 1,
1995,
154-166,
ISSN 0022-314X,
https://doi.org/10.1006/jnth.1995.1010.

\bibitem{ath2023} Athaide, E., Cardwell, E., and Thompson, C., Class number formulas for certain biquadratic fields. Hardy-Ramanujan Journal 46 (2023),  63-89.  https://doi.org/10.46298/hrj.2024.12573.

\bibitem{len1995} Lengyel, T., The Order of the Fibonacci and Lucas Numbers. The Fibonacci Quarterly, 1995; 33(3), 234–239. https://doi.org/10.1080/00150517.1995.12429139.

\bibitem{cox} Cox, D.A., (2013), Front Matter. In Primes of the Form x2 + ny2, D.A. Cox (Ed.). https://doi.org/10.1002/9781118400722.fmatter.

\bibitem{kur} Lemmermeyer F., Kuroda’s class number formula. Acta Arithmetica 1994; 66: 245–260.  https://doi.org/10.4064/aa-66-3-245-260.

\bibitem{koshy} Koshy, T., Fibonacci and Lucas numbers with application (2nd ed.). John Wiley \& Sons, 2017.

\bibitem{cohn} J. H. E. Cohn, On Square Fibonacci Numbers, J. London Math. Soc., V. 39, Issue 1, 1964, 537–540, https://doi.org/10.1112/jlms/s1-39.1.537.

\bibitem{kuroda}Kuroda S. Über, die Klassenzahlen algebraischer Zahlkörper. Nagoya Mathematical Journal. 1950;1:1-10. doi:10.1017/S0027763000022777.

\end{thebibliography}
\end{document}